\newcommand {\Z} {\mathbb{Z}}
\renewcommand {\P} {\mathbb{P}}
\newtheorem{Theo}{Theorem}
\newtheorem{Prop}[Theo]{Proposition}
\begin{document}

\author[Holroyd]{Alexander E.\ Holroyd}

\address[Alexander E. Holroyd]{Microsoft Research}

\author[Liggett]{Thomas M.\ Liggett}

\address[Thomas M. Liggett]{University of California, Los Angeles}

\title{Symmetric $1-$Dependent Colorings of the Integers}
\date{July 15, 2014}
\maketitle

\begin{abstract}
\sloppypar In a recent paper by the same authors, we constructed a stationary
$1-$dependent $4-$coloring of the integers that is invariant under
permutations of the colors.  This was the first stationary $k-$dependent
$q-$coloring for any $k$ and $q$. When the analogous construction is carried
out for $q>4$ colors, the resulting process is not $k-$dependent for any $k$.
We construct here a process that is symmetric in the colors and $1-$dependent
for every $q\geq 4$.  The construction uses a recursion involving Chebyshev
polynomials evaluated at $\sqrt{q}/2$.
\end{abstract}

\section{Introduction}
By a (proper) $q-$coloring of the integers, we mean a sequence $(X_i:i\in
\Z)$ of $[q]-$valued random variables satisfying  $X_i\neq X_{i+1}$ for all
$i$ (where $[q]:=\{1,\ldots,q\}$). The coloring is said to be stationary if
the (joint) distribution of $(X_i:i\in \Z)$ agrees with that of
$(X_{i+1}:i\in \Z)$, and $k-$dependent if the families $(X_i:i\leq m)$ and
$(X_i:i>m+k)$ are independent of each other for each $m$. In \cite{HL}, we
gave a construction of a stationary $1-$dependent $4-$coloring of the
integers that is invariant under permutations of the colors. When the same
construction is carried out for $q>4$ colors, the resulting distribution is
not $k-$dependent for any $k$. Of course, the $1-$dependent $4-$coloring is
also a $1-$dependent $q-$coloring for every $q>4$, and one may obtain other
$1-$dependent $q-$colorings by splitting a color into further colors using an
independent source of randomness.  However, these colorings are not symmetric
in the colors. We give here a modification of the process of \cite{HL} that
is symmetric in the colors and $1-$dependent for every $q\geq 4$. Here is our
main result.

\begin{Theo} \label{maintheorem}For each integer $q\geq 4$,
there exists a stationary  $1-$dependent $q-$coloring of the integers that is
invariant in law under permutations of the colors and under the reflection
$(X_i:i\in\Z)\mapsto (X_{-i}:i\in\Z)$.
\end{Theo}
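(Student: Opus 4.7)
The plan is to construct the law of $(X_i)_{i\in\Z}$ by specifying a consistent, color-symmetric family of finite-dimensional distributions and then invoking the Kolmogorov extension theorem. By stationarity and color symmetry, the probability of any proper word $(c_1,\ldots,c_n)\in[q]^n$ must depend only on the partition of $\{1,\ldots,n\}$ induced by equal colors (with the admissibility constraint that no block contains two consecutive indices); so it suffices to assign one nonnegative number per admissible partition and then check consistency and $1$-dependence. By the same symmetry observation, invariance under permutations of $[q]$ and under $i\mapsto -i$ will be automatic from the construction, provided I choose the formula so it depends only on the unordered equality-partition of the word.

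Following the hint in the abstract, I would introduce a sequence $a_n$ satisfying a Chebyshev-type recursion $a_{n+1}=\sqrt{q}\,a_n - a_{n-1}$, with initial values tuned so that $P(X_i=c)=1/q$ and so that masses at each length sum to one. Thus each $a_n$ admits a closed form in terms of the Chebyshev polynomials of the first and second kind evaluated at $\sqrt{q}/2$, which is the role of the recursion highlighted in the abstract. The probability of an admissible pattern would then be postulated in closed form in terms of the $a_n$'s, most naturally via a formula that factorizes in a controlled way across the blocks of the equality-partition of the word, the factors being certain $a_k$'s indexed by the block structure.

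The verification falls into three parts. \emph{Consistency}: summing the length-$(n+1)$ pattern probabilities over an endpoint color, subject to the proper-coloring constraint, must reproduce the length-$n$ probabilities, and this should reduce algebraically to the defining recursion $a_{n+1}=\sqrt{q}\,a_n-a_{n-1}$. \emph{$1$-dependence}: summing a length-$n$ joint probability over a single interior color (respecting that it differ from both neighbors) must factor as the product of the two marginal probabilities on either side; this is the central identity on which the construction hinges, and should follow from the recursion together with a Cassini-type determinantal identity $a_{n+1}a_{m-1}-a_n a_m = C_{n-m}$ enjoyed by Chebyshev sequences. \emph{Nonnegativity} is the main obstacle and the reason the hypothesis $q\geq 4$ appears: precisely at $\sqrt{q}/2\geq 1$ the Chebyshev polynomials grow monotonically rather than oscillating, so the $a_n$'s and all the combinations of them that arise as pattern probabilities can be controlled by hyperbolic or exponential estimates; for $q\leq 3$ the trigonometric regime produces sign changes that would doom the construction, consistent with the remark after the $q=4$ case of \cite{HL}. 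Once positivity, normalization, and consistency are in hand, the Kolmogorov extension theorem produces the desired process on $\Z$, and the color- and reflection-symmetry of the constructed formulas deliver the remaining invariances.
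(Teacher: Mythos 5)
Your proposal is a plan rather than a proof, and the central gap is that the construction itself is never specified. You say the probability of a proper word ``would be postulated in closed form \dots via a formula that factorizes in a controlled way across the blocks of the equality-partition,'' but you never write such a formula down, and it is far from clear that one exists: the measure actually constructed in the paper is defined not by a closed form but by a deletion recursion $P(x)=\frac{1}{D(n+1)}\sum_{i=1}^n C(n-2i+1)P(\widehat{x}_i)$ with \emph{position-dependent} Chebyshev coefficients, and the resulting cylinder probabilities (e.g.\ $P(1212)=\frac{q-3}{q^2(q-1)(q^2-3q+1)}$) do not visibly factor over the blocks of the equality partition. Without a concrete definition there is nothing to verify, and the verifications you sketch are themselves only assertions. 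Consistency does not ``reduce algebraically to the recursion $a_{n+1}=\sqrt q\,a_n-a_{n-1}$'' in any routine way: in the paper the analogous step rests on a delicate inductive identity, $\sum_{i}D(n-2i+1)P(\widehat x_i)=0$, whose proof requires expanding each $P(\widehat x_i)$ one further level, matching the coefficients of the doubly-deleted words $P(\widehat x_{i,j})$ via the non-obvious multipliers $\alpha_i=2C(n-2i+1)$, and introducing an antisymmetric correction $\beta_{i,j}$ to handle deletions that destroy properness. That identity, together with its consequence $Q(x)=Q^*(x)=C(n+1)P(x)$, is the real content of the proof and is entirely absent from your sketch. Your ``Cassini-type determinantal identity'' is indeed the right kind of tool --- it is essentially \eqref{firstpropd} --- but invoking its existence is not the same as carrying out the induction for consistency and for $P(x*y)=P(x)P(y)$.

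A secondary point: you identify nonnegativity as ``the main obstacle,'' but in the actual construction it is the easiest step. Once $P$ is defined by a recursion whose coefficients $C(n-2i+1)/D(n+1)$ are strictly positive for $q\geq 4$ (because $\sqrt q/2\geq 1$ places the Chebyshev polynomials in the hyperbolic, hence positive, regime), positivity of every $P(x)$ for proper $x$ is immediate by induction. You are right that the hypothesis $q\geq 4$ enters exactly through this positivity, but the difficulty of the theorem lies in consistency and $1$-dependence, not in sign control.
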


Our construction is given in the next section. Sections 3 and 4 provide some
preliminary results and the proof of Theorem \ref{maintheorem} respectively.

\section{The construction}

For $x=(x_1,x_2,\dots, x_n)\in[q]^n$, we will write $P(x)=\P(X_1=x_1,\dots,
X_n=x_n).$ To motivate the construction, we begin by noting that the
finite-dimensional distributions $P$ of the $4-$coloring in \cite{HL} are
defined recursively by $P(\emptyset)=1$ and
\begin{equation}\label{4Pdef}P(x)=\frac 1{2(n+1)}\sum_{i=1}^nP(\widehat x_i)\end{equation}
for proper $x\in [4]^n$, where $\widehat x_i$ is obtained from $x$ by
deleting the $i$th entry in $x$. Of course, even if $x$ is proper, $\widehat
x_i$ may not be. So the definition is completed by setting $P(x)=0$ for $x$'s
that are not proper.

For general $q\geq 4$, we will now allow the coefficients in the defining sum to depend on $i$ as well as $n$. Considering many special cases, and the constraints imposed by the $1-$dependence requirement, we were led to define
\begin{equation}\label{Pdef}P(x)=\frac1{D(n+1)}\sum_{i=1}^nC(n-2i+1)P(\widehat x_i)\end{equation}
for proper $x\in [4]^n$, in terms of two sequences $C$ and $D$. Again
motivated by computations in special cases, we take
\begin{align*}
C(n)&=T_n\bigl(\sqrt q/2\bigr), & n\geq 0; \\
D(n)&=\sqrt q\,U_{n-1}\bigl(\sqrt q/2\bigr), & n\geq 1,
\end{align*}
 where $T_n$ and $U_n$ are the Chebyshev polynomials of the first
and second kind respectively.

There are several standard equivalent definitions of Chebyshev polynomials.
One is
\begin{equation}\label{cheb}
T_n(u)=\cosh(nt) \quad\text{and}\quad U_n(u)=\frac{\sinh[(n+1)t]}{\sinh(t)},
\quad\text{where } u=\cosh(t).
\end{equation}
A variant definition using trigonometric functions (e.g.\ (22:3:3-4) of
\cite{OMS}) is easily seen to be equivalent by taking $t$ imaginary; the
hyperbolic function version is convenient for arguments $u\geq 1$.  Another
definition is
$$T_n(u)=\sum_{k=0}^{\lfloor\frac{n}2\rfloor}\binom{n}{2k}u^{n-2k}(u^2-1)^k
\quad\text{and}\quad
U_n(u)=\sum_{k=0}^{\lfloor\frac{n}2\rfloor}\binom{n+1}{2k+1}u^{n-2k}(u^2-1)^k.$$
That this is equivalent to \eqref{cheb} follows from e.g.\ (22:3:1-2) of
\cite{OMS}.

If $x$ is not a proper coloring, we take $P(x)=0$ as before.  We extend both
sequences $C$ and $D$ to all integer arguments by taking $C(n)$ and $D(n)$ to
be even and odd functions of $n$ respectively (in accordance with
\eqref{cheb}).

Observe that $C(n)$ and $D(n)$ are strictly positive for $q\geq 4$ and $n\geq
1$, and therefore $P(x)$ is strictly positive for all proper $x$.  Note also
that $C(n-2i+1)/D(n+1)$ is rational; therefore so is $P(x)$. (The factors of
$\sqrt q$ cancel).  When $q=4$ we have $C(n)=1$ and $D(n)=2n$, and so
(\ref{Pdef}) reduces to (\ref{4Pdef}) in this case. As we will see, the fact
that the coefficients in (\ref{Pdef}) depend on $i$ substantially complicates
the verifications of the required properties of $P$.

Here are a few examples of cylinder probabilities generated by (\ref{Pdef}).
\begin{gather*}P(1)=\frac 1q,\quad P(12)=\frac1{q(q-1)},
\quad P(121)=\frac 1{q^2(q-1)},\quad P(123)=\frac 1{q^2(q-2)},\\
P(1212)=\frac{q-3}{q^2(q-1)(q^2-3q+1)},\quad P(1234)=\frac
1{q^2(q^2-3q+1)}.\end{gather*}

\section{Preliminary results}

Chebyshev polynomials satisfy a number of standard identities. They lead to
identities satisfied by the sequences $C$ and $D$. The first three in the
proposition below are examples of this. The fourth is a consequence of the
third one. Before stating them, we record some values of $C$ and $D$ to
facilitate checking computations here and later.
\begin{gather*}
C(0)=1,\quad C(1)=\frac{\sqrt q}2,\quad C(2)=\frac{q-2}2,\quad
C(3)=\frac{\sqrt q(q-3)}2,\quad C(4)=\frac{q^2-4q+2}2.\\D(0)=0,\quad
D(1)=\sqrt q,\quad D(2)=q,\quad D(3)=\sqrt q(q-1),\quad D(4)=q(q-2).
\end{gather*}

\begin{Prop}\label{firstprop}  For $j,k,\ell,m,n\in \Z$, the following identities hold.
\begin{gather}
\label{firstpropa}2C(m)C(n)=C(m+n)+C(n-m).\\
\label{firstpropc}\frac{q-4}{2q}D(m)D(n)=C(m+n)-C(n-m).\\
\label{firstpropb}2C(m)D(n)=D(m+n)+D(n-m).\\
\label{firstpropd}C(j+k)D(k+\ell)=C(k)D(j+k+\ell)-C(\ell)D(j).
\end{gather}
\end{Prop}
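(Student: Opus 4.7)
The plan is to exploit the hyperbolic representations already recorded in \eqref{cheb}. For $q\geq 4$, choose $t\geq 0$ with $\cosh(t)=\sqrt{q}/2$. Then $C(n)=\cosh(nt)$, while a short computation using $\sinh^2(t)=\cosh^2(t)-1=(q-4)/4$ gives
\[
D(n)=\sqrt{q}\,\frac{\sinh(nt)}{\sinh(t)}=\frac{2\sqrt{q}}{\sqrt{q-4}}\,\sinh(nt).
\]
Both sides of each identity in the proposition are polynomials in $\sqrt{q}$ (the $\sqrt{q}$ factors cancel as noted in the construction), so it suffices to verify them for $q>4$, where $t>0$ and the displayed formula for $D$ is literally valid; the $q=4$ case then follows by polynomial identity (one can also check it directly, since $C\equiv 1$ and $D(n)=2n$ there).

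For \eqref{firstpropa} I would simply apply the product-to-sum rule $2\cosh(A)\cosh(B)=\cosh(A+B)+\cosh(A-B)$ with $A=mt$, $B=nt$, then use that $C$ is even. Identity \eqref{firstpropc} follows similarly: the prefactor $(q-4)/(2q)$ is exactly the reciprocal of the $4q/(q-4)$ arising from $D(m)D(n)$, so the left side collapses to $2\sinh(mt)\sinh(nt)=\cosh((m+n)t)-\cosh((n-m)t)$. Identity \eqref{firstpropb} is the mixed product-to-sum rule $2\cosh(A)\sinh(B)=\sinh(B+A)+\sinh(B-A)$, applied with $A=mt$, $B=nt$.

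For \eqref{firstpropd}, I would follow the hint that it is a consequence of \eqref{firstpropb}. Applying \eqref{firstpropb} three times yields
\[
2C(j+k)D(k+\ell)=D(j+2k+\ell)+D(\ell-j),
\]
\[
2C(k)D(j+k+\ell)=D(j+2k+\ell)+D(j+\ell),\qquad 2C(\ell)D(j)=D(j+\ell)+D(j-\ell).
\]
Subtracting the last two and using that $D$ is odd (so $-D(j-\ell)=D(\ell-j)$) gives $2[C(k)D(j+k+\ell)-C(\ell)D(j)]=D(j+2k+\ell)+D(\ell-j)$, which matches the first display.

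The only subtle point is book-keeping: one must remember that $C$ is even and $D$ is odd so that signs align when indices become negative, and one must handle $q=4$ (where the prefactor in \eqref{firstpropc} kills the left side and the right side vanishes by $C\equiv 1$). Neither of these is a real obstacle; the hardest step is just choosing the right three instantiations of \eqref{firstpropb} to derive \eqref{firstpropd}.
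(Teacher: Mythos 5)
Your proof is correct and follows essentially the same route as the paper: the authors prove \eqref{firstpropa}--\eqref{firstpropb} by citing standard Chebyshev product identities but explicitly note the alternative of checking them directly from \eqref{cheb} via the hyperbolic product-to-sum formulae, which is what you do, and your derivation of \eqref{firstpropd} by three applications of \eqref{firstpropb} plus oddness of $D$ is exactly the paper's argument. Your extra care with the degenerate case $q=4$ (via polynomial identity in $\sqrt{q}$) is a reasonable way to handle the $\sinh(t)=0$ issue that the paper glosses over.
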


\begin{proof} The first three parts are immediate consequences of (22:5:5-7)
in \cite{OMS}, or 22.7.24-26 in \cite {AS}, if
$m$ and $n$ are nonnegative.  None of the identities is changed by changing
the sign of either $m$ or $n$.  Therefore, they hold for all $m$ and $n$.
Alternatively, the identities may be checked directly from \eqref{cheb} using
the product formulae for hyperbolic functions. For (\ref{firstpropd}),
replace the products of $C$'s and $D$'s by sums of $D$'s using
(\ref{firstpropb}), and then use the fact that $D$ is an odd function.
\end{proof}

Next we verify some identities that involve both the sequences $C$ and $D$
and the measure $P$ defined by \eqref{Pdef}. For the statement of the second
part of the next result,  let
$$Q(x)=\frac1{D(n+1)}\sum_{i=1}^nC(2i)P(\widehat x_i)\quad\text{and}\quad
 Q^*(x)=\frac1{D(n+1)}\sum_{i=1}^nC(2n-2i+2)P(\widehat x_i)$$
for $x\in [q]^n$. The first part of Proposition \ref{secondprop} is needed in
proving the second part, which plays a key role in the proof of consistency
and $1-$dependence of $P$. Note the similarity between the left side of
(\ref{Dsum}) and the right side of (\ref{Pdef}).

\begin{Prop}\label{secondprop} If $n\geq 1$, and $x$ is a proper coloring of length $n$, then
\begin{gather}\label{Dsum}\sum_{i=1}^nD(n-2i+1)P(\widehat x_i)=0;\\
\label{Qis}Q(x)=Q^*(x)= P(x)C(n+1).
\end{gather}
\end{Prop}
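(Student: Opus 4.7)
The plan is to prove (\ref{Dsum}) by induction on $n$, and then deduce (\ref{Qis}) algebraically from it using the Chebyshev identities of Proposition \ref{firstprop}. The base cases $n=1$ (where $D(0)=0$) and $n=2$ (where the two summands differ only by the sign of $D(\pm1)$, while $P(x_1)=P(x_2)=1/q$) are immediate.

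For the inductive step of (\ref{Dsum}), set $A:=\sum_{i=1}^n D(n-2i+1)P(\widehat x_i)$. First I would expand each $P(\widehat x_i)$ (for proper $\widehat x_i$) via (\ref{Pdef}) at length $n-1$ and swap the order of summation, writing $D(n)A$ as a sum over pairs $\{i,k\}$ with $i<k$ weighted by $P(\widehat x_{\{i,k\}})$, the sub-coloring with $x_i$ and $x_k$ deleted. In parallel, multiply the inductive hypothesis applied to each proper $\widehat x_i$ by $C(n-2i+1)$ and sum to obtain an expression $B=0$, also cast in the same pair form. The key algebraic step is to apply (\ref{firstpropd}) with its free variable $k$ set to $1$: this shows that the unrestricted coefficient of $P(\widehat x_{\{i,k\}})$ in both $D(n)A$ and $B$ equals $\tfrac{\sqrt q}{2}D(2n-2i-2k+2)$. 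Combined with the immediate consequence $D(u)C(v)-C(u)D(v)=D(u-v)$ of (\ref{firstpropb}), this reduces $D(n)A-B$ to a sum whose $(i,k)$-coefficient is $(a_i-a_k)D(2k-2i-1)$, where $a_j\in\{0,1\}$ records whether $\widehat x_j$ is a proper coloring.

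For a pair with $|i-k|\geq 2$, properness of $\widehat x_{\{i,k\}}$ (automatic in the surviving terms since $P$ vanishes otherwise) forces $a_i=a_k=1$, because any adjacency created by removing only $x_i$ cannot be resolved by further removing a non-neighbor $x_k$. So only pairs with $k=i+1$ remain, and with $D(1)=\sqrt q$ the task reduces to showing $\sum_i(a_i-a_{i+1})P(\widehat x_{\{i,i+1\}})=0$. The structural observation is that whenever $a_i=0$ one has $x_{i-1}=x_{i+1}$, and direct comparison shows $\widehat x_{\{i-1,i\}}=\widehat x_{\{i,i+1\}}$ as sequences. Writing $f(i):=P(\widehat x_{\{i,i+1\}})$ and using $a_1=a_n=1$, Abel summation gives $\sum_{i=1}^{n-1}(a_i-a_{i+1})f(i)=f(1)-f(n-1)+\sum_{i=2}^{n-1}a_i[f(i)-f(i-1)]$; the factor $a_i$ can be dropped (since $a_i=0$ implies $f(i)=f(i-1)$), and the remaining telescoping sum cancels $f(1)-f(n-1)$, giving zero.

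Having (\ref{Dsum}), I derive (\ref{Qis}) as follows. Write $Q(x)\pm Q^*(x)=\frac{1}{D(n+1)}\sum_i[C(2i)\pm C(2n-2i+2)]P(\widehat x_i)$ and set $a=n+1,b=n-2i+1$ so that $2i=a-b$ and $2n-2i+2=a+b$. By (\ref{firstpropa}), $C(a-b)+C(a+b)=2C(n+1)C(n-2i+1)$, so $Q+Q^*=2C(n+1)P(x)$ by (\ref{Pdef}). By (\ref{firstpropc}), $C(a+b)-C(a-b)=\tfrac{q-4}{2q}D(n+1)D(n-2i+1)$, so $Q^*-Q=\tfrac{q-4}{2q}D(n+1)\cdot\sum_i D(n-2i+1)P(\widehat x_i)=0$ by (\ref{Dsum}). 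Adding and halving yields $Q(x)=Q^*(x)=C(n+1)P(x)$. The main obstacle throughout is the bookkeeping for sub-colorings that become improper after deletion: the inductive hypothesis is available only for proper $\widehat x_i$, and the resulting boundary contributions at $k=i+1$ must be absorbed via the Abel-summation step, which hinges on the local two-periodicity forced by an improper $\widehat x_i$.
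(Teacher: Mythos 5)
Your proof is correct and follows essentially the same route as the paper: induction on $n$ for (\ref{Dsum}) using the same pair-expansion with weights $C(n-2i+1)$ (the paper's $\alpha_i/2$) and the same key facts that improper deletions force $|j-i|\leq 1$ and $P(\widehat x_{i-1,i})=P(\widehat x_{i,i+1})$, followed by the same derivation of (\ref{Qis}) from (\ref{firstpropa}), (\ref{firstpropc}) and (\ref{Dsum}). The only differences are organizational (you compute the difference $D(n)A-B$ directly and telescope via Abel summation, where the paper posits an ansatz and solves for $\alpha_i,\beta_{i,j}$), plus a harmless stray factor of $D(n+1)$ in your expression for $Q^*-Q$.
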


\begin{proof}
For the first statement, let $R$ be the set of proper colorings, and
$\widehat x_A$ be obtained by deleting the entries $x_i$ for $i \in A$ from
$x$. The proof of (\ref{Dsum}) is by induction on $n$, the length of $x$. The
identity is easily seen to be true if $n\leq 2$. Suppose that (\ref{Dsum}) is
true for all $x$ of length $n-1$, and let $x\in R$ have length $n$.  For
those $i$ with $\widehat x_i\in R$, applying
 (\ref{Dsum}) gives
\begin{equation}\label{sum4}\sum_{j=1}^{i-1}D(n-2j)P(\widehat x_{i,j})+
\sum_{j=i+1}^nD(n-2j+2)P(\widehat x_{i,j})=0.\end{equation} On the other hand, if $\widehat
x_i\notin R$, then $1<i<n$ and
\begin{equation}\label{sum5}P(\widehat x_{i,j})=0\text{ if }|j-i|>1
\text{ and }P(\widehat x_{i-1,i})=P(\widehat x_{i,i+1}).\end{equation} The
left side of (\ref{Dsum}) for $x$ can be written, using the definition of
$P(\widehat x_i)$ and then (\ref{firstpropb}), as
\begin{equation}\label{proof2}\begin{aligned}&=
\frac1{D(n)}\sum_{\substack{1\leq i\leq n:\\ \widehat x_i\in R}}D(n-2i+1)
\bigg[\sum_{1\leq j<i}C(n-2j)P(\widehat x_{i,j})+\sum_{i<j\leq n}C(n-2j+2)P(\widehat x_{i,j})\bigg]\\
&=\frac1{2D(n)}\sum_{\substack{1\leq j<i\leq n:\\ \widehat x_i\in R}}
[D(2n-2i-2j+1)+D(2j-2i+1)]P(\widehat x_{i,j})\\
&\quad+\frac1{2D(n)}
\sum_{\substack{1\leq i<j\leq n:\\ \widehat x_i\in R}}[D(2n-2i-2j+3)+D(2j-2i-1)]
P(\widehat x_{i,j}).\end{aligned}\end{equation}
Rearranging, and ignoring the $2D(n)$ in the denominator, gives
\begin{equation}\begin{aligned}\label{sum6}\sum_{i=1}^n{\bf 1}
[\widehat x_{i}\in R]\bigg[&\sum_{j=1}^{i-1}[D(2n-2i-2j+1)+D(2j-2i+1)]
P(\widehat x_{i,j})\\+&
\sum_{j=i+1}^n[D(2n-2i-2j+3)+D(2j-2i-1)]P(\widehat x_{i,j})\bigg].\end{aligned}\end{equation}
We must show that (\ref{sum4}) and (\ref{sum5}) imply that (\ref{sum6}) is zero.

We would like to write (\ref{sum6}) as a linear combination of expressions that vanish because of
 (\ref{sum4}) and (\ref{sum5}) as follows.
\begin{equation}\label{sum7}\sum_{\substack{1\leq i\leq n:\\ \widehat x_i\in R}}
\alpha_i\bigg[\sum_{j=1}^{i-1}D(n-2j)P(\widehat x_{i,j})
+\sum_{j=i+1}^nD(n-2j+2)P(\widehat x_{i,j})\bigg]
+\sum_{\substack{1\leq i\leq n:\\ \widehat x_i\in R}}\sum_{j=1}^n\beta_{i,j}
P(\widehat x_{i,j}),\end{equation}
where $\beta_{i,i}=\beta_{i,i-1}+\beta_{i,i+1}=0$.
If $1\leq i< j\leq n$, the coefficient of $P(\widehat x_{i,j})$ in (\ref{sum6}) is
\begin{equation}\label{sum9}\begin{aligned}
&{\bf 1}[\widehat x_j\in R]\,\bigl [D(2n-2i-2j+1)+D(2i-2j+1)\bigr]\\
{}+&{\bf 1}[\widehat x_i\in R]\,\bigl[D(2n-2i-2j+3)+D(2j-2i-1)\bigr].
\end{aligned}\end{equation}
The coefficient of $P(\widehat x_{i,j})$ in (\ref{sum7}) is
\begin{equation}\label{sum10}{\bf 1}[\widehat x_j\in R]\alpha_jD(n-2i)
+{\bf 1}[\widehat x_i\in R]\alpha_iD(n-2j+2)+{\bf 1}[\widehat x_i\notin
R]\beta_{i,j} +{\bf 1}[\widehat x_j\notin R]\beta_{j,i}.\end{equation} We
need to choose the $\alpha$'s and $\beta$'s so that (\ref{sum9}) and
(\ref{sum10}) agree. If $\widehat x_i,\widehat x_j\in R$, this says
$$D(2n-2i-2j+1)+D(2n-2i-2j+3)=\alpha_jD(n-2i)+\alpha_i D(n-2j+2)$$
since $D$ is an odd function. It may sound unreasonable to expect to solve
this system, since there are $n$ unknowns and $\binom n2$ equations. However,
$D$ satisfies relations that make this possible. Solving the equations for
small $n$ suggests trying $\alpha_i=2C(n-2i+1)$. The fact that this choice
solves these equations for all choices of $n,i,j$ then follows from
(\ref{firstpropb}) and the fact that $D$ is odd. If $\widehat x_i\notin R$
and $\widehat x_j\notin R$, (\ref{sum9}) and (\ref{sum10}) agree if
$\beta_{i,j}+\beta_{j,i}=0$. If $\widehat x_i\in R$ and $\widehat x_j\notin
R$, they agree if
$$D(2n-2i-2j+3)+D(2j-2i-1)=\alpha_iD(n-2j+2)+\beta_{j,i}.$$
Using (\ref{firstpropb}) again gives $\beta_{j,i}=2D(2j-2i-1)$. Similarly, if
$\widehat x_i\notin R$ and $\widehat x_j\in R$, they agree if
$\beta_{i,j}=2D(2i-2j+1)$. With these choices, $\beta$ is anti-symmetric, and
$\beta_{k,k-1}=2D(1)$ and $\beta_{k,k+1}=2D(-1)$, so
$\beta_{k,k-1}+\beta_{k,k+1}=0$ as required. This completes the induction
argument.

For (\ref{Qis}), consider the case of $Q$ first. Use the definition of $P$ to
write the right side of (\ref{Qis}) as
$$\frac{C(n+1)}{D(n+1)}\sum_{i=1}^nC(n-2i+1)P(\widehat x_i).$$
Using (\ref{firstpropa}), this becomes
$$\frac1{2D(n+1)}\sum_{i=1}^n C(2n-2i+2)P(\widehat x_i)+\frac12Q(x).$$
Therefore, we need to prove that
$$\sum_{i=1}^n\bigl[C(2n-2i+2)-C(2i)\bigr]\,P(\widehat x_i)=0.$$
But by (\ref{firstpropc}), this follows from (\ref{Dsum}). The proof for
$Q^*$ is similar.
\end{proof}

\section{Proof of the main result}
We will often write $x_1x_2\cdots x_n$ instead of $(x_1,x_2,\dots, x_n)$
below.  If $x\in[q]^m$ and $y\in[q]^n$, let $xy$ denote  the word $x_1\cdots
x_my_1\cdots y_n\in [q]^{m+n}$
\begin{proof}[Proof of Theorem~\ref{maintheorem}]
We first need to show that the finite dimensional distributions defined in
(\ref{Pdef}) are consistent, i.e.,  that
\begin{equation}\label{consistency}\sum_{a\in[q]}P(xa)=P(x),
\qquad x\in[q]^n,\;n\geq 0.\end{equation} This is true if $x$ is not proper,
since then $xa$ is also not proper, and so both sides vanish. For proper $x$,
the proof is by induction on $n$. Note that for $a\in[q]$,
$$P(a)=\frac{C(0)}{D(2)}=\frac 1q,$$
so $\sum_{a\in[q]}P(a)=1$. This gives (\ref{consistency}) for $n=0.$ Suppose
it holds for all $x\in[q]^{n-1}$ with $n\geq 1$. Then for proper $x\in[q]^n$,
using the induction hypothesis in the second equality,
$$\begin{aligned}\sum_{a\in[q]}P(xa)&=
\sum_{a\neq x_n}\frac1{D(n+2)}\bigg[\sum_{i=1}^nC(n-2i+2)P(\widehat x_ia)+C(-n)P(x)\bigg]\\
&=\frac1{D(n+2)}\bigg[\sum_{i=1}^nC(n-2i+2)P(\widehat
x_i)-C(-n+2)P(x)+(q-1)C(-n)P(x)\bigg].\end{aligned}$$ The middle term in the
second line accounts for the missing term $a=x_n$ when the inductive
hypothesis is applied to the case $i=n$ (since $\widehat x_n x_n=x$).
%is part of the contribution in the case $i=n$ in the first line.
Using $(j,k,\ell)=(1,n-2i+1,2i)$ in (\ref{firstpropd}) gives
$$\frac{C(n-2i+2)}{D(n+2)}=\frac{C(n-2i+1)}{D(n+1)}-\frac{C(2i)D(1)}{D(n+2)D(n+1)}.$$
Therefore
$$\sum_{a\in[q]}P(xa)=P(x)-\frac{Q(x)}{D(n+2)}-
\frac{C(n-2)}{D(n+2)}P(x)+(q-1)\frac{C(n)}{D(n+2)}P(x).$$
This is $P(x)$, as required, by (\ref{Qis}) and the fact that
$$(q-1)C(n)=C(n-2)+C(n+1)D(1),$$
which is obtained by taking $(j,k,\ell)=(2,-n,n+1)$ in (\ref{firstpropd}),
and then canceling a factor of $\sqrt q$.

Invariance of the measure under permutations of colors and translations is
immediate from the definition.  Invariance under reflection amounts to
checking $P(x)=P(x_n\cdots x_1)$, which follows from the fact that the
coefficients of $\widehat x_i$ and $\widehat x_{n-i+1}$ in (\ref{Pdef}),
which are $C(n-2i+1)$ and $C(-n+2i-1)$ respectively, are equal by the
symmetry of $C$.

For $1-$dependence, we need to show that for $x\in[q]^m$ and $y\in[q]^n$ with
$m,n\geq 0$,
$$P(x*y)=P(x)P(y),$$
where the * means that there is no constraint at the single site between $x$
and $y$. This is again true if $x$ or $y$ is not proper since then both sides
are zero. For proper $x$ and $y$, the proof is by induction, but now on
$m+n$. The statement is immediate if $m=0$ or $n=0$. So, we take $m\geq 1$
and $n\geq 1$.

There are two cases, according to whether or not $xy$ is a proper coloring,
i.e., whether $x_m$ and $y_1$ are equal or different. Assume first that
$x_m=y_1$. Without loss of generality, take their common value to be 1. Then
using the definition of $P$, including the fact that $P(xy)=0$,
\begin{multline} P(x*y)=\sum_{a\in[q]}P(xay)
=\frac1{D(n+m+2)}\sum_{a\neq 1}\bigg[\sum_{i=1}^mC(n+m-2i+2)P(\widehat x_iay)\\
\shoveright{\qquad+C(n-m)P(xy)+\sum_{j=1}^nC(n-m-2j)P(xa\widehat y_j)\bigg]}\\
=\frac1{D(n+m+2)}\bigg[\sum_{i=1}^mC(n+m-2i+2)P(\widehat x_i*y)+
\sum_{j=1}^nC(n-m-2j)P(x*\widehat y_j)\bigg].
\end{multline}
Using the induction hypothesis, this becomes
$$P(x*y)=\frac1{D(n+m+2)}\bigg[P(y)\sum_{i=1}^mC(n+m-2i+2)
P(\widehat x_i)+P(x)\sum_{j=1}^nC(n-m-2j)P(\widehat y_j)\bigg].$$
Taking $(j,k,l)=(n,m-2i+1,i)$ in (\ref{firstpropd}) gives
$$\frac{C(n+m-2i+2)}{D(n+m+2)}=\frac{C(m-2i+1)}{D(m+1)}-\frac{C(2i)D(n+1)}{D(m+1)D(n+m+2)}.$$
Similarly,
$$\frac{C(m+2j-n)}{D(n+m+2)}=\frac{C(2j-n-1)}{D(n+1)}-\frac{C(2n-2j+2)D(m+1)}{D(n+1)D(n+m+2)}.$$
Therefore, since $C(\cdot)$ is even,
$$P(x*y)=P(y)\bigg[P(x)-\frac{D(n+1)}{D(n+m+2)}Q(x)\bigg]\\
+P(x)\bigg[P(y)-\frac{D(m+1)}{D(n+m+2)}Q^*(y)\bigg].
$$
By (\ref{Qis}),
$$P(x*y)=P(x)P(y)\bigg[2-\frac{C(m+1)D(n+1)+C(n+1)D(m+1)}{D(n+m+2)}\bigg].$$
Taking $(j,k,l)=(n,m-2i+1,i)$ in (\ref{firstpropd}),
we see that the expression in brackets above is 1, as required.

Assume now that $x_m\neq y_1$, say $x_m=1$ and $y_1=2$. Then
\begin{multline}
P(x*y)=\sum_{a\in[q]}P(xay)
=\frac1{D(n+m+2)}\sum_{a\neq 1,2}\bigg[\sum_{i=1}^mC(n+m-2i+2)P(\widehat x_iay)\\
\shoveright{\qquad+C(n-m)P(xy)+\sum_{j=1}^nC(n-m-2j)P(xa\widehat y_j)\bigg]}\\
\frac1{D(n+m+2)}\bigg[\sum_{i=1}^mC(n+m-2i+2)P(\widehat
x_i*y)+\sum_{j=1}^nC(n-m-2j)P(x*\widehat y_j)\bigg]
\end{multline}
as in the previous case. However, in the previous case, the term $P(xy)$
dropped out because $xy$ was not a proper coloring. In this case, the term
$(q-2)C(n-m)P(xy)$ is cancelled by the terms $-P(xy)C(n-m+2)$ and
$-P(xy)C(n-m-2)$, which arise from
$$\sum_{a\neq 1,2}P(\widehat x_may) =P(\widehat x_m*y)-P(xy)
\text{ and }\sum_{a\neq 1,2}P(xa\widehat y_1)=P(x*\widehat y_1)-P(xy).$$
The fact that the overall coefficient of $P(xy)$ vanishes is a consequence
(\ref{firstpropa}) with $m=2$, since $2C(2)=q-2$. The rest of the proof is
the same as in the case $x_m=y_1$ above.
\end{proof}

\bibliographystyle{amsalpha}

\end{document}